\theoremstyle{plain}
\newtheorem{thm}{Theorem}[section]
\newtheorem{lemma}[thm]{Lemma}
\newtheorem{coll}[thm]{Corollary}
\theoremstyle{definition}
\newtheorem{obs}[thm]{Observation}
\theoremstyle{remark}
\newcommand{\precdot}{\prec\mathrel{\mkern-5mu}\mathrel{\cdot}}
\tikzstyle{L} = [rectangle, rounded corners, minimum width=2cm, minimum height=0.5cm,text centered, draw=black, fill=white!30]
\tikzstyle{T} = [rectangle, rounded corners, minimum width=2cm, minimum height=0.5cm,text centered, draw=black, fill=black!10]
\tikzstyle{C} = [rectangle, rounded corners, minimum width=2cm, minimum height=0.5cm,text centered, draw=black, fill=black!5]
\title{On a Combinatorial Problem Arising in Machine Teaching}
\author[1]{Brigt Håvardstun}
\author[2]{Jan Kratochv\'{\i}l}
\author[1]{Joakim Sunde}
\author[1]{Jan Arne Telle}
\affil[1]{Department of Informatics, University Of Bergen, Bergen, Norway}
\affil[2]{Department of Applied Mathematics, Charles University, Praha, Czech Republic}
\date{\today}
\begin{document}
\maketitle

\begin{abstract}
We study a model of machine teaching where the teacher mapping is constructed from a size function on both concepts and examples. The main question in machine teaching is the minimum number of examples needed for any concept, the so-called teaching dimension. A recent paper \cite{Greedy} conjectured that the worst case for this model, as a function of the size of the concept class, occurs when the consistency matrix contains the binary 
representations of numbers from zero and up. In this paper we prove their conjecture.
       The result can be seen as a generalization of a theorem resolving the edge isoperimetry problem for hypercubes \cite{1976}, 
       and our proof is based on a lemma of  \cite{graham}.
\end{abstract}


\section{Introduction}
In formal models of \emph{machine learning} \cite{PAC} we have a concept class $C$ of
possible hypotheses, an unknown target concept $c^* \in C$
and training data given by correctly labelled random examples. The concept class $C$ is given by a binary matrix $M$ whose rows are concepts and whose column set is the domain of examples $X$, with $M(c,x)=1$ if $c$ is consistent with $(x,1)$. 
In formal models of {\em machine teaching} a set of
labelled examples $w$ called a {\em witness}
is instead carefully chosen by a teacher $T$, i.e. $T(c^*)=w$, so the learner can reconstruct $c^*$.
The common goal is to keep the {\em teaching dimension}, i.e., the cardinality of the witness
set, $\max_{c \in C}|T(c)|$, as small as possible. In recent years, the field of machine teaching has
seen various applications in fields like 
pedagogy~\cite{SHAFTO201455}, 
trustworthy AI~\cite{zhu2018overview},
reinforcement learning \cite{DBLP:conf/aaai/ZhangBMS021}, active learning \cite{DBLP:conf/nips/WangSC21} and
explainable AI~\cite{yang2021mitigating}.

Various models of machine teaching have been proposed, e.g. the classical teaching dimension model \cite{goldman1995complexity}, the optimal teacher model~\cite{balbach2008measuring}, recursive teaching~\cite{zilles2011models},  preference-based teaching~\cite{gao2017preference}, no-clash teaching~\cite{no-clash}, and probabilistic teaching \cite{DBLP:conf/ijcai/FerriHT22}. In \cite{telle2019teaching} a model focusing on teaching size is introduced, and in \cite{Greedy} an algorithm called Greedy constructing the teacher mapping in this model is given. 

Greedy assumes two total orderings $\precdot_C$ on $C$ and $\precdot_X$ on $X$, with  $\precdot_X$ extended to $\precdot_W$ on subsets of labelled examples $W = 2^{X \times \{0,1\}}$ by shortlex ordering.
In the Greedy algorithm the teacher defines its mapping iteratively: go through $W$ in the order of $\precdot_W$, and for a given witness $w=\{(x_1,b_1)...(x_q,b_q)\}$ find the earliest (in $\precdot_C$ order) $c \in C$ consistent with $w$ (i.e. with $M(c,x_i)=b_i$ for all $1 \leq i \leq q$) such that $T(c)$ is not yet defined, then set $T(c)=w$ and continue with next witness (if no such $c$ exists then drop this $w$).

To compare the teaching dimension achievable by Greedy to that of other models, the authors of \cite{Greedy} argued as follows to show that if a large witness is used then this is because $|C|$ is large:
If Greedy assigns $T(c)=w$ for some  $w=\{(x_1,b_1)...(x_q,b_q)\}$ 
then 
we may ask, why was $c$ not assigned to a smaller witness?  Assuming there are $|X|=n$ examples, then any subset $Q \subseteq X$ of size $q-1$ when labelled consistent with $c$ has already been tried by Greedy, and hence some other concept must already have been assigned to any such $Q$, and all these concepts are distinct. This means we must have taught ${n \choose q-1}=k$ other concepts already. But then we have already taught at least $k+1$ concepts and we can again ask why were any of these not taught by a smaller witness of size $q-2$? It must be that any such witnesses (labelled to be consistent with some concept among the k+1 we already have) must have been used to teach other, again distinct, concepts. 

Note that, to verify how many distinct witnesses
exist, corresponding to new concepts, that are labelled consistently with one of these $k + 1$ concepts, one must
sum up the number of distinct rows when projecting on $q-2$ columns, for all
choices of these columns. Note that the number of distinct rows, i.e witnesses and hence number of concepts, when projecting on $q-2$ columns,
for all choices of these columns, depends on the matrix $M$ you do the projection on. 
As the authors of \cite{Greedy} wanted a lower bound on number of concepts, they needed
to find the matrix $M$ minimizing the sum of unique rows after doing the projection.
Continuing to argue like this, in order to achieve a lower bound on the size of $C$ when Greedy uses a witness of size $q$, the authors of \cite{Greedy} arrive at the following combinatorial question. What is the binary matrix $M$ on $k$ distinct rows and $n$ columns that would give the smallest sum when projecting on $q$ columns? 
They conjectured that this was achieved by the matrix $H_{n,k}$ consisting of the $k$ rows corresponding to the binary representations of the numbers between zero and $k-1$, with leading 0s to give them length $n$. In this paper we prove this conjecture (note that through personal communication we were made aware of this conjecture months before its publication on arxiv).

 When $q=n-1$ this minimization question is equivalent to asking for the induced subgraph on $k$ vertices of the hypercube of dimension $n$ having the maximum number of edges, since the sum mentioned above for a matrix $M$ is then $k$ times $n$ minus the number of edges in the subgraph induced by the nodes corresponding to the rows of $M$ (since the dimension of the hypercube this edge crosses, when that column is left out, we get two identical rows corresponding to the endpoints of the edge). This has been shown 
 \cite{1976} 
 to be achieved by $H_{n,k}$. 
 The above question is called the edge isoperimetry problem for the hypercube. The maximum number of edges of an induced subgraph on $k$ vertices of the hypercube has been studied extensively in \cite{1974,1975,1976,1990,2013} to name a few articles.
 The result we give in this paper is thus a generalization of the edge isoperimetry problem on the hypercube, as we show that $H_{n,k}$ is the solution not only when $q=n-1$, but for all values of $1 \leq q \leq n$.

 The rest of our paper is organized as follows.
In Section \ref{sec:two} we give the formal definition of the conjecture. In Section \ref{sec:three} we show that the conjecture would be settled if we could prove a stronger theorem. Then in Section \ref{sec:four} we prove this stronger theorem, based on an old result from \cite{graham}.

\section{Statement of the main theorem} \label{sec:two}
Let \(M\) be a \(k \times n \) binary matrix whose all \(k\) rows are distinct. Let \(\mathcal{M}_{n,k} \) be the set of all such matrices. For any binary matrix \(A\), let \(dif(A)\) denote the number of unique rows in the matrix \(A\). For \(Q \subset \{1,2,...\} \), let \(M(Q)\) be the sub-matrix of \(M\in {\mathcal{M}_{n,k}}\) formed by taking the columns with indices from \(Q\). Finally for integers \(a\) and \(b\) where \(a \leq b \) let \([a,b] = \{a,a+1,...,b\} \). Our main interest is the number

\[m_q(M) = \sum_{Q \in {[1,n] \choose q }} dif(M(Q)). \]

For fixed positive integers \(k,n\) and \(q\), we are interested in finding a matrix \(M \in \mathcal{M}_{k,n} \) with the minimum value of \(m_q(M) \). Let \(m_q(n,k)\) be this minimum value, i.e.,
\[m_q(n,k) = \min_{M \in \mathcal{M}_{n,k}} m_q(M).  \]


We show that the $k \times n$ binary matrix $H_{n,k}$ whose rows are the binary representations of all numbers between zero and $k-1$ achieves this minimum value of \(m_q(n,k)\). It will be useful for us to use the following recursive definition of $H_{n,k}$. Let \(\mathbf{0} \) be the all 0 row vector and let \(\mathbf{0}^T\) be the all 0 column vector, and similarly for \(\mathbf{1}\) and \(\mathbf{1}^T\). Then 

\[H_{n,k} = \begin{cases}
    \mathbf{0} & k=1 \\
    \begin{pmatrix}
    H_{n-1, \lceil \frac{k}{2} \rceil } & \mathbf{0}^T\\
    H_{n-1, \lfloor \frac{k}{2} \rfloor } & \mathbf{1}^T 
\end{pmatrix} & k>1 \\
    
\end{cases}  \]


Let \(h_q(n,k) = m_q(H_{n,k})  \). Our goal is thus to prove the following theorem.
\begin{thm}
    \label{main_theorem}
    For any positive integers \(q,n,k\) where \(q\leq n \) and \(k \leq 2^n \),
    \[m_q(n,k) = h_q(n,k). \] 
\end{thm}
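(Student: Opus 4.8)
The plan is to prove the lower bound $m_q(M)\ge h_q(n,k)$ for every $M\in\mathcal{M}_{n,k}$ by induction on $n$, exploiting the recursive structure of $H_{n,k}$. Viewing the rows of $M$ as a family of $k$ distinct subsets of $[1,n]$, the quantity $dif(M(Q))$ is the number of distinct traces of this family on the coordinate set $Q$, so $m_q(M)$ counts traces summed over all $q$-subsets $Q$. The base cases $q=n$ (where $m_n(M)=k$ for every $M$), $k=1$, and $q=0$ (where $m_0(M)=1$) are immediate, so I may assume $2\le k$ and $q<n$.

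For the inductive step, fix any non-constant column $j$ (which exists once $k\ge2$) and split the rows by their entry in column $j$: let $M_0,M_1$ be the submatrices on the remaining $n-1$ columns formed by the rows having a $0$, respectively a $1$, in column $j$, with $k_0+k_1=k$ and $k_0\ge k_1\ge1$ after relabelling. Splitting the sum over $Q$ according to whether $j\in Q$ yields the identity $m_q(M)=m_{q-1}(M_0)+m_{q-1}(M_1)+m_q(M_0)+m_q(M_1)-\sum_{Q}\lvert A_Q\cap B_Q\rvert$, where $A_Q,B_Q$ are the trace sets of $M_0,M_1$ on $Q$ avoiding column $j$: when $j\in Q$ the two halves contribute disjoint traces, and when $j\notin Q$ their traces merge exactly on the common patterns counted by $A_Q\cap B_Q$. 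Since $\lvert A_Q\cap B_Q\rvert\le\min(\lvert A_Q\rvert,\lvert B_Q\rvert)$, summation gives $\sum_Q\lvert A_Q\cap B_Q\rvert\le\min(m_q(M_0),m_q(M_1))$, hence $m_q(M)\ge m_{q-1}(M_0)+m_{q-1}(M_1)+\max(m_q(M_0),m_q(M_1))$. Because $M_0,M_1$ have at most $2^{n-1}$ distinct rows their sizes are admissible, so the induction hypothesis yields $m_q(M)\ge h_{q-1}(n-1,k_0)+h_{q-1}(n-1,k_1)+h_q(n-1,k_0)$.

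Crucially, the same splitting applied to $H_{n,k}$ is tight: taking the last column as most significant bit, the two halves are $H_{n-1,\lceil k/2\rceil}$ and $H_{n-1,\lfloor k/2\rfloor}$, the rows of the latter are nested inside the former, so the intersection term equals $m_q$ of the smaller half and the bound collapses to the identity $h_q(n,k)=h_{q-1}(n-1,\lceil k/2\rceil)+h_{q-1}(n-1,\lfloor k/2\rfloor)+h_q(n-1,\lceil k/2\rceil)$. The whole theorem therefore reduces to the purely numerical inequality, for all splits $k_0\ge k_1\ge1$ with $k_0+k_1=k$,
\[
h_{q-1}(n-1,k_0)+h_{q-1}(n-1,k_1)+h_q(n-1,k_0)\ \ge\ h_{q-1}(n-1,\lceil k/2\rceil)+h_{q-1}(n-1,\lfloor k/2\rfloor)+h_q(n-1,\lceil k/2\rceil),
\]
i.e.\ that the map $k_0\mapsto h_{q-1}(n-1,k_0)+h_{q-1}(n-1,k-k_0)+h_q(n-1,k_0)$ attains its minimum at the balanced split. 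This is the statement I would isolate as the stronger theorem to prove separately.

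I expect this last inequality to be the main obstacle. Writing $\delta^{p}(m)=h_p(n-1,m+1)-h_p(n-1,m)$ for the forward differences, moving the split one step away from balanced changes the left-hand side by $\delta^{q-1}(k_0)-\delta^{q-1}(k-k_0-1)+\delta^q(k_0)$ with $k_0\ge k-k_0-1$; showing this is nonnegative needs both a concavity-type control of $k\mapsto h_{q-1}(n-1,k)$ and a comparison between the increments of $h_{q-1}$ and of $h_q$, which do not follow from monotonicity alone. These structural properties of $k\mapsto h_q(n,k)$ are what I would establish by a secondary induction on $n$, drawing the decisive estimate from the lemma of \cite{graham}, and I would cross-check the scheme against $q=n-1$, where the statement specializes to the known edge-isoperimetric optimality of $H_{n,k}$ from \cite{1976}.
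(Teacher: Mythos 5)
Your reduction is correct and, despite the different packaging, is essentially the paper's Section 3. Your splitting identity on a non-constant column $j$ (with the inclusion--exclusion term $\sum_Q|A_Q\cap B_Q|$) and the bound by $\min(m_q(M_0),m_q(M_1))$ recover exactly Lemma \ref{m_reccurence}, since $h_{q-1}(n-1,k_0)+h_q(n-1,k_0)=h_q(n,k_0)$ (the contribution of the top block $(H_{n-1,k_0}\ \mathbf{0}^T)$, as used in the proof of Lemma \ref{h_reccurence}); your three-term target inequality is therefore precisely Theorem \ref{conjecture_A}, $h_q(n,x)+h_{q-1}(n-1,k-x)\ge h_q(n,\lceil\frac{k}{2}\rceil)+h_{q-1}(n-1,\lfloor\frac{k}{2}\rfloor)$. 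The choice of inducting on $n$ rather than $k$, and of an arbitrary non-constant column rather than the last one, are cosmetic.

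The genuine gap is that this key inequality --- the entire content of Section \ref{sec:four}, which is the hard half of the paper --- is only named, not proved, and the route you sketch for it does not work. You propose to show that each one-step move away from the balanced split is nonnegative, i.e. that $\delta^{q-1}(k_0)-\delta^{q-1}(k-k_0-1)+\delta^{q}(k_0)\ge 0$. By Lemma \ref{one_bigger} this quantity equals $\binom{n-|k_0|}{q-|k_0|}-\binom{n-1-|k-k_0-1|}{q-1-|k-k_0-1|}$, and it can be strictly negative: take $k=16$ and $k_0=14$, so $|k_0|=3$ while $|k-k_0-1|=|1|=1$, giving $\binom{n-3}{q-3}-\binom{n-2}{q-2}<0$ for $q\ge 2$. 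So the function $x\mapsto h_q(n,x)+h_{q-1}(n-1,k-x)$ is not monotone step by step on $[\lceil\frac{k}{2}\rceil,k-1]$; only the cumulative statement $f(j)\ge f(0)$ holds, and no local concavity or increment comparison will deliver it. The paper instead compares the two \emph{sums} of binomial coefficients globally (Lemma \ref{finally}) by constructing a bijection $\theta$ between the index intervals $[x,x+j-1]$ and $[x-j,x-1]$ satisfying $|\theta(i)|\ge|i|-1$ (Lemma \ref{number_theory}, a nontrivial induction built on the Graham-type Lemma \ref{graham}). That bijection is the decisive idea, and it is absent from your proposal.
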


Here is a diagram showing how we will prove Theorem \ref{main_theorem}.

\begin{center}
    \resizebox{\linewidth}{!}{
    \begin{tikzpicture}[]
    \node [L] (L3_2) at (0,0) {Lemma 3.2};
    \node [L] (L3_3) at (0, -2) {Lemma 3.3};
    \node [L] (L3_4) at (3,-1) {Lemma 3.4};

    \draw[->] (L3_2) -- (L3_4);
    \draw[->] (L3_3) -- (L3_4);

    \node [L] (L4_1) at (0,-4) {Lemma 4.1};
    \node [C] (C4_2) at (3,-4) {Cor. 4.2};
    \draw[->] (L4_1) -- (C4_2);

    \node [L] (L4_3) at (0,-6) {Lemma 4.3 \cite{graham}};
    \node [L] (L4_5) at (3.5,-6) {Lemma 4.5};
    \node [L] (L4_6) at (6,-6) {Lemma 4.7};
    
    \draw[->] (L4_3) -- (L4_5);
    \draw[->] (L4_5) -- (L4_6);

    \node [T] (T3_1) at (9, -5) {Thm. 3.1};
    \draw[->] (L4_6) -- (T3_1);
    \draw[->] (C4_2) -- (T3_1);

    \node [T] (T1) at (12,-3) {Thm. 2.1};
    \draw[->] (L3_4) -- (T1);
    \draw[->] (T3_1) -- (T1);



    \end{tikzpicture}
    }
\end{center}

\section{A sufficient condition}
\label{sec:three}


The goal of this section is to prove that the following 
theorem (whose proof we leave to the next section)  implies Theorem \ref{main_theorem}.
\begin{restatable}[]{thm}{toprove}
    \label{conjecture_A} For any positive integers \(q,n,k\) where \(q\leq n \) and \(k \leq 2^n \),
    \[\min_{\lceil \frac{k}{2} \rceil \leq x \leq k-1 } h_q(n,x) + h_{q-1}(n-1,k-x) \]
    \[= h_q(n, \lceil \frac{k}{2} \rceil ) + h_{q-1}(n-1, \lfloor \frac{k}{2} \rfloor ). \]
\end{restatable}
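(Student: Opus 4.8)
The plan is to turn the statement into an inequality about sums of binomial coefficients, reduce it by telescoping to a family of prefix-sum inequalities, and then lean on the \emph{aggregate} (rather than termwise) control that I expect Graham's lemma to supply.

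First I would pin down the increment formula for $h_q$, which I expect to be the content of Lemma 4.1 / Corollary 4.2. Passing from $H_{n,k-1}$ to $H_{n,k}$ adds one row, the binary representation of $k-1$; in a projection onto a $q$-set $Q$ of columns this new row is distinct from all earlier rows precisely when no earlier number agrees with $k-1$ on $Q$, which happens iff every $1$-bit of $k-1$ lies in $Q$, i.e. iff $\operatorname{supp}(k-1)\subseteq Q$. Writing $s=\operatorname{pc}(k-1)$ for the number of $1$-bits of $k-1$, the number of such $Q$ is $\binom{n-s}{q-s}$, so
\[ h_q(n,k)-h_q(n,k-1)=\binom{n-\operatorname{pc}(k-1)}{\,q-\operatorname{pc}(k-1)\,},\qquad h_q(n,k)=\sum_{j=0}^{k-1}\binom{n-\operatorname{pc}(j)}{\,q-\operatorname{pc}(j)\,}. \]
Setting $\alpha_j=\binom{n-\operatorname{pc}(j)}{q-\operatorname{pc}(j)}$ and $\beta_j=\binom{n-1-\operatorname{pc}(j)}{q-1-\operatorname{pc}(j)}$, the whole theorem becomes a statement about these two explicit sequences.

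Second, I would telescope. Since the claimed minimizer $x=\lceil k/2\rceil$ is the left endpoint, it suffices to show $f(x):=h_q(n,x)+h_{q-1}(n-1,k-x)\ge f(\lceil k/2\rceil)$ for every $x$ with $\lceil k/2\rceil\le x\le k-1$. Using the formula above, and that the reflection $j\mapsto k-1-j$ maps the block $[\lceil k/2\rceil,x-1]$ bijectively onto $[k-x,\lfloor k/2\rfloor-1]$, one obtains the clean identity
\[ f(x)-f\big(\lceil\tfrac{k}{2}\rceil\big)=\sum_{j=\lceil k/2\rceil}^{x-1}\big(\alpha_j-\beta_{\,k-1-j}\big). \]
Thus, writing $\psi_j=\alpha_j-\beta_{k-1-j}$, the theorem is equivalent to the assertion that every prefix sum $\sum_{j=\lceil k/2\rceil}^{X}\psi_j$, for $\lceil k/2\rceil\le X\le k-2$, is nonnegative.

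The hard part, and the reason a genuine lemma is needed rather than a one-line estimate, is that the termwise inequality $\psi_j\ge 0$ is \emph{false}: when $\operatorname{pc}(j)>q$ we have $\alpha_j=0$ while $\beta_{k-1-j}$ can be positive, so some $\psi_j$ are negative and $f$ is not monotone on the range; the minimum survives only because the negative terms are compensated in aggregate by earlier positive ones. Neither convexity nor monotonicity of $h_q(n,\cdot)$ holds, so this prefix-sum domination is exactly what I expect Graham's lemma (Lemma 4.3), routed through Lemmas 4.5 and 4.7, to provide. The sequences are far from arbitrary: peeling the last bit gives the self-similarity $\alpha^{(n,q)}_{2a}=\alpha^{(n,q)}_{a}$ and $\alpha^{(n,q)}_{2a+1}=\alpha^{(n-1,q-1)}_{a}$ (and likewise for $\beta$), where the superscript records the parameters, mirroring the recursive definition of $H_{n,k}$ and the recursion behind Corollary 4.2.

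Finally I would run an induction on $n$ (equivalently on bit length), using this self-similarity to split each prefix block $[\lceil k/2\rceil,X]$ by the last bit into subproblems of the same shape over $n-1$ columns, with Graham's lemma serving as the base/combining inequality that guarantees nonnegativity of the partial sums is preserved under the split. I expect the floor/ceiling bookkeeping, as both $k$ and the cutoff $X$ are halved through the recursion, to be the most delicate routine part, whereas the genuine mathematical obstacle is the aggregate prefix-sum control, which is precisely the point where the reduction forces us to invoke Graham's lemma rather than any pointwise comparison.
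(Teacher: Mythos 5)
Your first two steps match the paper exactly: the increment formula $h_q(n,x+1)-h_q(n,x)=\binom{n-|x|}{q-|x|}$ is the paper's Lemma 4.1/Corollary 4.2, and your telescoped reformulation is equivalent to the inequality the paper derives, namely that for every feasible $j$ one must show $\sum_{i=\lceil k/2\rceil}^{\lceil k/2\rceil+j-1}\binom{n-|i|}{q-|i|}\ \ge\ \sum_{i=\lfloor k/2\rfloor-j}^{\lfloor k/2\rfloor-1}\binom{n-1-|i|}{q-1-|i|}$. You are also right that the termwise comparison under the reflection pairing $j\mapsto k-1-j$ fails. But at exactly that point your argument stops: you assert that the needed ``aggregate prefix-sum control'' will be supplied by Graham's lemma together with an induction on $n$ via the self-similarity $\alpha_{2a}=\alpha_a$, $\alpha_{2a+1}=\beta_a$, without carrying out either step. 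That is the entire content of the theorem, and the plan you sketch is not the mechanism that makes it work.

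The missing idea is a \emph{re-pairing}, not a prefix-sum induction. The paper's Lemma 4.5 constructs, for the integer intervals $T=[s,s+r-1]$ and $B=[s-r-t+1,s-t]$, a bijection $\theta:T\to B$ with $|\theta(x)|\ge|x|-t$; taking $t=1$ (and $t=2$ in the odd-$k$ case) and applying the diagonal monotonicity $\binom{n-1-m}{q-1-m}\le\binom{n-m'}{q-m'}$ for $m\ge m'-1$ turns the displayed sum inequality into a \emph{termwise} one after reindexing --- no aggregate compensation argument is ever needed. That bijection is itself nontrivial: it is built by induction on the interval length with a case split on where the leading $1$-bit first appears, invoking Graham's lemma on the aligned power-of-two blocks and a bit-complementation trick in one of the two cases. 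Your proposed alternative (splitting the prefix block by the last bit) does not obviously reduce the prefix-sum claim to smaller instances of itself: the reflection $j\mapsto k-1-j$ flips parity when $k$ is even, so the even/odd sub-blocks of the left interval do not align with those of the right, and the resulting mixture of $\alpha^{(n-1,q-1)}$- and $\beta^{(n-1,q-1)}$-type terms is not again a statement of the same shape. As written, the proposal correctly reduces the theorem to the hard inequality but leaves that inequality unproven.
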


\noindent
which is just stating that the minimum value of the expression on the left occurs when \(x =\lceil \frac{k}{2} \rceil \).

\begin{lemma}
\label{h_reccurence}
    The $h$ numbers satisfy the recurrence relation
$$h_q(n,1)={n\choose q}$$
and
$$h_q(n,k)=h_q(n,\lceil\frac{k}{2}\rceil) + 
h_{q-1}(n-1,\lfloor\frac{k}{2}\rfloor)$$
for $k>1$.
\end{lemma}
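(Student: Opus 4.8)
The plan is to compute $h_q(n,k)=m_q(H_{n,k})$ directly from the recursive definition of $H_{n,k}$, classifying the $q$-subsets $Q\subseteq[1,n]$ according to whether they contain the last column $n$. That column is exactly the one splitting the top block $H_{n-1,\lceil k/2\rceil}$ (whose entry there is $0$) from the bottom block $H_{n-1,\lfloor k/2\rfloor}$ (whose entry there is $1$). The base case is immediate: $H_{n,1}=\mathbf 0$ has a single row, so $dif(H_{n,1}(Q))=1$ for every $Q$, and summing over the $\binom{n}{q}$ choices of $Q$ gives $h_q(n,1)=\binom{n}{q}$.

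For $k>1$ I would split the sum defining $m_q(H_{n,k})$ into the two cases. When $n\notin Q$ we project onto columns $Q\subseteq[1,n-1]$, i.e.\ onto the two vertically stacked blocks. Since $\lfloor k/2\rfloor\le\lceil k/2\rceil$, the set of full rows of $H_{n-1,\lfloor k/2\rfloor}$ (the binary representations of $0,\dots,\lfloor k/2\rfloor-1$) is contained in the set of full rows of $H_{n-1,\lceil k/2\rceil}$, so the projected rows of the bottom block add nothing new and $dif(H_{n,k}(Q))=dif(H_{n-1,\lceil k/2\rceil}(Q))$; summing over these $Q$ yields $h_q(n-1,\lceil k/2\rceil)$. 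When $n\in Q$, write $Q=Q'\cup\{n\}$ with $Q'\subseteq[1,n-1]$ of size $q-1$; now the last column separates the top block (value $0$) from the bottom block (value $1$), so no projected top row can coincide with a projected bottom row and $dif(H_{n,k}(Q))=dif(H_{n-1,\lceil k/2\rceil}(Q'))+dif(H_{n-1,\lfloor k/2\rfloor}(Q'))$; summing over $Q'$ yields $h_{q-1}(n-1,\lceil k/2\rceil)+h_{q-1}(n-1,\lfloor k/2\rfloor)$. Adding the two cases gives the intermediate identity
\[
h_q(n,k)=h_q(n-1,\lceil k/2\rceil)+h_{q-1}(n-1,\lceil k/2\rceil)+h_{q-1}(n-1,\lfloor k/2\rfloor).
\]

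To reach the stated form I still need the auxiliary Pascal-type identity $h_q(n,m)=h_q(n-1,m)+h_{q-1}(n-1,m)$ for $m\le 2^{n-1}$. This is precisely where the hypothesis $k\le 2^n$ is used, since it guarantees $m:=\lceil k/2\rceil\le 2^{n-1}$: in that regime the most significant bit of every number $0,\dots,m-1$ is $0$, so $H_{n,m}$ contains an all-zero column whose removal leaves exactly $H_{n-1,m}$. Splitting the defining sum of $h_q(n,m)$ on whether $Q$ contains this all-zero column — which never changes the count of distinct rows — yields the identity at once. Substituting it (with $m=\lceil k/2\rceil$) into the intermediate identity collapses its first two terms into $h_q(n,\lceil k/2\rceil)$ and produces exactly the claimed recurrence.

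I expect the only genuinely delicate point to be recognizing that the direct column-splitting argument lands on $h_q(n-1,\lceil k/2\rceil)$ rather than on $h_q(n,\lceil k/2\rceil)$, so that the extra Pascal-type identity is needed to reconcile the number of columns; everything else is careful bookkeeping about which projected rows can and cannot coincide in the two cases.
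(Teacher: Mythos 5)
Your proof is correct and follows essentially the same route as the paper's: split the sum over $Q$ according to whether $n\in Q$, observe that the bottom block's rows are absorbed by the top block's when $n\notin Q$ and are disjoint from them when $n\in Q$, and then collapse $h_q(n-1,\lceil k/2\rceil)+h_{q-1}(n-1,\lceil k/2\rceil)$ into $h_q(n,\lceil k/2\rceil)$ using the fact that an all-zero column does not change any $dif$ count. The only cosmetic difference is that you package this last step as an explicit Pascal-type identity $h_q(n,m)=h_q(n-1,m)+h_{q-1}(n-1,m)$ for $m\le 2^{n-1}$, whereas the paper phrases it as $m_q\bigl((H_{n-1,\lceil k/2\rceil}\ \mathbf{0}^T)\bigr)=m_q(H_{n,\lceil k/2\rceil})$; these are the same observation.
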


\begin{proof}
Let $Q$ be a $q$-element subset of the column-index set $\{1,2,\ldots,n\}$. If $n\not\in Q$, then each  of the bottom $\lfloor\frac{k}{2}\rfloor$ rows of $H_{n,k}(Q)$ appears as a row among the  $\lceil\frac{k}{2}\rceil$ top ones, and hence $m_q(H_{n,k}(\{1,2,\ldots,n-1\}))=m_q(H_{n-1,\lceil\frac{k}{2}\rceil})=h_q(n-1,\lceil\frac{k}{2}\rceil)$.
If $n\in Q$, every row from the bottom $\lfloor\frac{k}{2}\rfloor$ rows of $H_{n,k}(Q)$ differs from any row from the $\lceil\frac{k}{2}\rceil$ top ones, and so the sum over those $Q$ that contain $n$ contributes exactly $m_{q-1}(H_{n-1,\lceil\frac{k}{2}\rceil})+m_{q-1}(H_{n-1,\lfloor\frac{k}{2}\rfloor})=h_{q-1}(n-1,\lceil\frac{k}{2}\rceil) +
h_{q-1}(n-1,\lfloor\frac{k}{2}\rfloor)$. Thus 
$$h_q(n,k)$$
$$=h_q(n-1,\lceil\frac{k}{2}\rceil)+
h_{q-1}(n-1,\lceil\frac{k}{2}\rceil) +
h_{q-1}(n-1,\lfloor\frac{k}{2}\rfloor).$$ This can be slightly simplified as follows. Note that $h_q(n-1,\lceil\frac{k}{2}\rceil)+
h_{q-1}(n-1,\lceil\frac{k}{2}\rceil)$ is exactly the contribution of the $\lceil\frac{k}{2}\rceil$ top rows of $H_{n,k}$ to $m_q(H_{n,k})$, i.e., $m_q((H_{n-1,\lceil\frac{k}{2}\rceil} \mathbf{0}^T))$ what equals $m_q((\mathbf{0}^T H_{n-1,\lceil\frac{k}{2}\rceil} ))=m_q(H_{n,\lceil\frac{k}{2}\rceil})=h_q(n,\lceil\frac{k}{2}\rceil)$ and the claim follows. 
\end{proof} 

\begin{lemma}
    \label{m_reccurence} For any positive integers \(q,n,k\) where \(q\leq n \) and \(k \leq 2^n \),
\[m_q(n,k) \geq \min_{\lceil \frac{k}{2} \rceil \leq x \leq k-1 } m_q(n,x) + m_{q-1}(n-1,k-x) \]
\end{lemma}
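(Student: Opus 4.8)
The plan is to take an arbitrary $M \in \mathcal{M}_{n,k}$ and split its rows according to the value appearing in a single, well-chosen column, then show that the two resulting pieces account for a quantity of the form $m_q(n,x) + m_{q-1}(n-1,k-x)$. Since $k \geq 2$ and the $k$ rows of $M$ are distinct, not every column can be constant, so there is a column $j$ taking both values. After possibly swapping the roles of $0$ and $1$ in column $j$ (which changes neither the distinctness of the rows nor any $dif$ value), let $A$ be the set of rows with a $0$ in column $j$ and $B$ the set of rows with a $1$, with $a = |A| \geq |B| = b \geq 1$. Then $a + b = k$ forces $a \in [\lceil \frac{k}{2} \rceil, k-1]$, i.e. $x = a$ lands in exactly the range over which the minimum on the right-hand side is taken. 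Write $M_A \in \mathcal{M}_{n-1,a}$ and $M_B \in \mathcal{M}_{n-1,b}$ for the submatrices obtained by restricting to the rows of $A$, resp.\ $B$, and deleting column $j$; their rows remain distinct.

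First I would split the defining sum $m_q(M) = \sum_{Q} dif(M(Q))$ according to whether the index $j$ lies in $Q$. When $j \in Q$, column $j$ separates every row of $A$ from every row of $B$, so no projected $A$-row can coincide with a projected $B$-row; hence $dif(M(Q)) = dif(M_A(Q\setminus\{j\})) + dif(M_B(Q\setminus\{j\}))$ exactly. When $j \notin Q$ the projection forgets column $j$, and since every row of $M_A$ also appears in $M$, we only have the inequality $dif(M(Q)) \geq dif(M_A(Q))$, which suffices for a lower bound. Summing the two cases gives
$$m_q(M) \geq \Bigl(\sum_{j \notin Q} dif(M_A(Q)) + \sum_{j \in Q} dif(M_A(Q\setminus\{j\}))\Bigr) + \sum_{j \in Q} dif(M_B(Q\setminus\{j\})).$$

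The key observation is that the two $A$-sums recombine into $m_q$ of a single $n$-column matrix. Let $M|_A$ denote the restriction of $M$ to the rows of $A$ keeping all $n$ columns; since those rows are constantly $0$ in column $j$, this is just $M_A$ with a constant column reinserted, so $M|_A \in \mathcal{M}_{n,a}$. A routine split of $m_q(M|_A)$ by membership of $j$ in the index set (removing a constant column leaves every $dif$ value unchanged) shows $m_q(M|_A) = m_q(M_A) + m_{q-1}(M_A)$, which is precisely the parenthesised expression. Likewise the last sum is $m_{q-1}(M_B)$. Therefore $m_q(M) \geq m_q(M|_A) + m_{q-1}(M_B) \geq m_q(n,a) + m_{q-1}(n-1,b)$, the final step being just the definitions of $m_q(n,a)$ and $m_{q-1}(n-1,b)$ as minima over $\mathcal{M}_{n,a}$ and $\mathcal{M}_{n-1,b}$. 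Bounding the right-hand side below by the minimum over $x$, and then taking $M$ to attain $m_q(n,k)$, finishes the argument.

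The step I expect to require the most care is the existence and use of a non-constant column: the interval $[\lceil \frac{k}{2} \rceil, k-1]$ is nonempty only because distinctness of the rows (together with $k \geq 2$) guarantees a column carrying both bits, which is exactly what keeps both parts nonempty and pins $x=a$ into the stated range. The remaining bookkeeping — the exact split when $j \in Q$, the one-sided bound when $j \notin Q$, and the identity $m_q(M|_A) = m_q(M_A) + m_{q-1}(M_A)$ from reinserting a constant column — is mechanical, but one must treat the degenerate projections consistently, namely $q = n$ (which makes the $j \notin Q$ case empty) and $q = 1$ (which makes the index set $Q\setminus\{j\}$ empty), so that the empty sums and the zero-column $dif$ value are interpreted the same way throughout.
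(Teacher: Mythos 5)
Your proof is correct and follows essentially the same route as the paper's: split the rows by the value in a non-constant column (the paper takes it to be the last column w.l.o.g.), decompose the sum over $Q$ by whether that column index belongs to $Q$, use the exact additivity of $dif$ when it does and the one-sided bound when it does not, and recombine the two $A$-sums into $m_q$ of the top block with its constant column reinserted (the paper's $T^*=(T\;\mathbf{0}^T)$ is your $M|_A$). The only cosmetic differences are that you justify the existence of a non-constant column explicitly and you state the reinsertion identity $m_q(M|_A)=m_q(M_A)+m_{q-1}(M_A)$ as a named step rather than folding it into the chain of sums.
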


\begin{proof}
Let $A\in {\cal M}_{n,k}$ be a matrix that minimizes $m_q$ over ${\cal M}_{n,k}$, i.e., it satisfies $m_q(A)=m_q(n,k)$.

If $k=1$, then every $Q\in {[1,n]\choose q}$ contributes 1 to the sum $\sum_Q \mbox{dif}(M(Q))$, and hence $m_q(A)={n\choose q}$.

Let $k>1$. Suppose w.l.o.g. that the last column contains both 0's and 1's. Let $y$ be the number of 0's in it, and assume that the 0's are in rows $1,\ldots,y$ and the 1's in rows $y+1,\ldots,k$, with $y\ge k-y$, i.e., $y\ge \lceil\frac{k}{2}\rceil$. Let $T$ be the submatrix of $A$ determined by rows $1,\ldots, y$ and columns $1\,ldots, n-1$, and let $B$ be the submatrix determined by rows $y+1,\ldots, k$ and columns  $1,\ldots, n-1$, i.e., 

$$
A=\left( 
\begin{array}{cc}
T & \mathbf{0}^T\\
B & \mathbf{1}^T
\end{array}
\right).
$$
We further denote by $T^*=(T\; \mathbf{0}^T)$ the sub matrix of $A$ formed by its top $\lceil\frac{k}{2}\rceil$ rows.

Let $Q$ be a $q$-element subset of the column-index set $\{1,2,\ldots,n\}$. If $n\not\in Q$, then $\mbox{dif}(A(Q))\ge \mbox{dif}(T(Q))$. If $n\in Q$, 
$\mbox{dif}(A(Q))=\mbox{dif}(T(Q-\{n\}))+
\mbox{dif}(B(Q-\{n\}))=\mbox{dif}(T^*(Q))+
\mbox{dif}(B(Q-\{n\}))$. Therefore
$$
m_q(A)=\sum_{Q:n\not\in Q}\mbox{dif}(A(Q))
+\sum_{Q:n\in Q}\mbox{dif}(A(Q))\ge 
$$
$$\ge 
\sum_{Q\in {\{1\ldots n-1\}\choose q}}\mbox{dif}(T(Q))
$$ 
$$+ \sum_{Q'\in {\{1\ldots n-1\}\choose q-1}}\mbox{dif}(T(Q'))
+ \sum_{Q'\in {\{1\ldots n-1\}\choose q-1}}\mbox{dif}(B(Q'))= 
$$

$$=
\sum_{Q\in {\{1\ldots n-1\}\choose q}}\mbox{dif}(T(Q))
+\sum_{Q\in {\{1\ldots n\}\choose q}, n\in Q}\mbox{dif}(T^*(Q))
$$
$$+\sum_{Q'\in {\{1\ldots n-1\}\choose q-1}}\mbox{dif}(B(Q'))=
$$
$$ = m_q(T^*)+m_{q-1}(B)
\ge
$$
$$\geq 
m_q(n,y)+ m_{q-1}(n-1,k-y)\geq
$$
$$
\geq 
\min_{\lceil\frac{k}{2}\rceil\le x\le k-1}m_q(n,x)+m_{q-1}(n-1,k-x).
$$

\end{proof}

\begin{lemma}
    \label{equivalent}
    Theorem \ref{conjecture_A} implies Theorem \ref{main_theorem}
 \end{lemma}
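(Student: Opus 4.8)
The plan is to deduce Theorem~\ref{main_theorem} from Theorem~\ref{conjecture_A} by proving the two inequalities $m_q(n,k)\le h_q(n,k)$ and $m_q(n,k)\ge h_q(n,k)$ separately. The first is immediate: the rows of $H_{n,k}$ are the binary encodings of $0,\dots,k-1$ and hence distinct, so $H_{n,k}\in\mathcal{M}_{n,k}$ and therefore $m_q(n,k)=\min_{M}m_q(M)\le m_q(H_{n,k})=h_q(n,k)$ straight from the definitions. All the work is in the reverse inequality, which I would obtain by strong induction on $k$ with Theorem~\ref{conjecture_A} doing the decisive step.

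Before the induction I would record the degenerate case $q=0$: for every $n$ and $k$ the single empty-column projection contributes $1$, so $m_0(n,k)=1=h_0(n,k)$ holds unconditionally. This is needed so that the inductive step remains valid when it lowers $q$ to $0$ (that is, when $q=1$). The base case of the induction is $k=1$: here each projection onto $q$ columns leaves a single row, giving $m_q(n,1)=\binom{n}{q}=h_q(n,1)$ by Lemma~\ref{h_reccurence}.

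For the inductive step, fix $k>1$ and assume $m_{q'}(n',k')=h_{q'}(n',k')$ for all valid $(q',n')$ and all $k'<k$. Lemma~\ref{m_reccurence} gives
\[ m_q(n,k)\ \ge\ \min_{\lceil\frac{k}{2}\rceil\le x\le k-1}\bigl(m_q(n,x)+m_{q-1}(n-1,k-x)\bigr). \]
Every $x$ in the range satisfies $x\le k-1<k$ and $k-x\le\lfloor\frac{k}{2}\rfloor<k$, so both summands have first size parameter strictly below $k$; the side conditions also survive, since $q\le n$ gives $q-1\le n-1$ and $k\le 2^n$ gives $k-x\le\lfloor\frac{k}{2}\rfloor\le 2^{n-1}$. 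Applying the induction hypothesis to each summand replaces every $m$ by the matching $h$, so the right-hand side becomes $\min_{\lceil\frac{k}{2}\rceil\le x\le k-1}\bigl(h_q(n,x)+h_{q-1}(n-1,k-x)\bigr)$. Theorem~\ref{conjecture_A} evaluates this minimum as $h_q(n,\lceil\frac{k}{2}\rceil)+h_{q-1}(n-1,\lfloor\frac{k}{2}\rfloor)$, and Lemma~\ref{h_reccurence} recognizes that sum as exactly $h_q(n,k)$. Hence $m_q(n,k)\ge h_q(n,k)$, closing the induction, and combining with the easy direction yields $m_q(n,k)=h_q(n,k)$, which is Theorem~\ref{main_theorem}.

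I expect essentially no obstacle inside this lemma, since its entire arithmetic content is supplied by Theorem~\ref{conjecture_A}; the only genuine substance---the claim that the optimum of the split is attained at the balanced value $x=\lceil\frac{k}{2}\rceil$---is quarantined there. Within the present argument the one thing that requires care is the bookkeeping of the boundary and degenerate cases (the unconditional $q=0$ identity and the $k=1$ base), so that the inductive step never invokes an instance of the claim that has not already been established for a smaller $k$.
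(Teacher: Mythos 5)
Your proposal is correct and follows essentially the same route as the paper: the easy inequality $m_q(n,k)\le h_q(n,k)$ from the definition, then strong induction on $k$ combining Lemma~\ref{m_reccurence}, the induction hypothesis, Theorem~\ref{conjecture_A}, and Lemma~\ref{h_reccurence}. Your extra bookkeeping of the $q=0$ and $k=1$ boundary cases is a harmless (and slightly more careful) elaboration of what the paper does.
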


\begin{proof} 
Certainly $m_q(n,k)\le h_q(n,k)$, we prove the other inequality by induction on $k$. The base case $k=1$ follows from $m_q(n,1)=h_q(n,1)={n\choose q}$.

Suppose $k>1$. Lemmas \ref{h_reccurence} and \ref{m_reccurence} imply that
$$m_q(n,k) \ge 
\min_{\lceil\frac{k}{2}\rceil\le x\le k-1}m_q(n,x)+m_{q-1}(n-1,k-x) \ge
$$
(by the induction hypothesis)
$$
\ge
\min_{\lceil\frac{k}{2}\rceil\le x\le k-1}h_q(n,x)+h_{q-1}(n-1,k-x) =
$$
(by Theorem \ref{conjecture_A})
$$
= h_q(n,\lceil\frac{k}{2}\rceil)+h_{q-1}(n-1,\lfloor\frac{k}{2}\rfloor) =
$$
(by Lemma~\ref{h_reccurence})
$$= h_q(n,k).
$$

\end{proof}

\section{Proving Theorem \ref{conjecture_A}}
\label{sec:four}

In this section we will prove Theorem~\ref{conjecture_A},  and show that \(h_q(n,x)\) "increases" at least as fast as \(h_{q-1}(n-1,k-x)\) "decreases" when \(x\) starts at \(\lceil \frac{k}{2} \rceil \) and increases until \(k-1\). If we can show this, then the minimum value of  \( h_q(n,x) + h_{q-1}(n-1,k-x)\) will be at \(x = \lceil \frac{k}{2} \rceil \). To be more precise, we want to show that 
\begin{equation}
\label{DecByJThe3.1}
\centering
\begin{split}
h_q(n,\lceil \frac{k}{2}\rceil +j) &- h_q(n,\lceil \frac{k}{2} \rceil ) \\
\geq h_{q-1}(n-1, \lfloor \frac{k}{2} \rfloor )& - h_{q-1}(n-1,\lfloor \frac{k}{2} \rfloor - j )
\end{split}
\end{equation}
for any \(j\ge 1  \) such that \(\lceil \frac{k}{2} \rceil + j \leq k-1\). 

We first need to understand the behavior of the \(h_q(n,k) \) numbers as \(k\) increases or decreases. Let $|x|$ denote the Hamming weight (number of 1's) in the binary representation of integer $x$. We recall that the binomial coefficient $n\choose k$ by definition evaluates to 0 when $k<0$ or $k>n$. Similarly, we define the 
boundary values of $h_q(n,k)$ for $q=0$ and $k=0$ as $h_0(n,k)=1$ (for $k>0$) and $h_q(n,0)=0$. 

\begin{lemma}
    \label{one_bigger}
    For any integers \(x,q,n\) such that \(0\le x \leq 2^n-1 \) and \(0\le q\leq n \), we have
    \[h_q(n,x+1) = h_q(n,x) + {n-|x| \choose q-|x| }, \]
    and for integers \(x,q,n \) such that \(1 \leq x \leq 2^{n-1} \) and \( 1 \leq q \leq n \), we have
    \[h_{q-1}(n-1,x-1) = h_{q-1}(n-1,x) - {n-1-|x-1| \choose q-1-|x-1| }. \]
\end{lemma}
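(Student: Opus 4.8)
The plan is to compute the finite difference $h_q(n,x+1)-h_q(n,x)$ directly from the definition $h_q(n,k)=m_q(H_{n,k})=\sum_{Q\in\binom{[1,n]}{q}}dif(H_{n,k}(Q))$. Since $dif$ counts distinct rows, $m_q(M)$ depends only on the \emph{set} of rows of $M$, and the set of rows of $H_{n,k}$ is exactly the set of binary strings representing $0,1,\ldots,k-1$. Hence passing from $H_{n,x}$ to $H_{n,x+1}$ simply adjoins one new row, namely the binary string $r_x$ of the integer $x$. For a fixed column set $Q$, the quantity $dif(\,\cdot\,(Q))$ increases by exactly $1$ if the projection of $r_x$ onto $Q$ differs from the projection of every $r_y$ with $0\le y\le x-1$, and by $0$ otherwise. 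Summing over $Q$,
$$h_q(n,x+1)-h_q(n,x)=\#\Big\{Q\in\binom{[1,n]}{q}: \text{proj}_Q(r_x)\neq \text{proj}_Q(r_y)\ \text{for all } 0\le y\le x-1\Big\}.$$

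The crux is then a clean characterization of which $Q$ contribute to this count. I claim the projection of $r_x$ onto $Q$ is \emph{new} (distinct from all smaller rows) if and only if the support of $x$ — the set of columns where $r_x$ has a $1$ — is contained in $Q$. For one direction, if $x$ has a $1$ in some column outside $Q$, flipping that bit to $0$ produces an integer $y$ with $0\le y<x$ that agrees with $x$ on every column of $Q$, so $r_y$ is already a row of $H_{n,x}$ and the projection is not new. For the converse, if every $1$ of $x$ lies inside $Q$, then any $y$ agreeing with $x$ on $Q$ coincides with $x$ on all of $x$'s $1$-positions and has nonnegative contributions from the remaining columns, whence $y\ge x$; thus no $y<x$ matches, and the projection is new. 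This equivalence — translating ``first occurrence of a projection'' into a containment of supports — is the main obstacle, and it is precisely where the structure of $H_{n,k}$ (its rows being an initial segment of the integers in binary) is used.

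Given the equivalence, the count is immediate: the support of $x$ has size $|x|$, and the number of $q$-subsets $Q\subseteq[1,n]$ containing it is obtained by choosing the remaining $q-|x|$ columns from the $n-|x|$ columns outside the support, i.e. $\binom{n-|x|}{q-|x|}$ (which is $0$ when $|x|>q$, matching the fact that a weight exceeding $q$ can never fit inside a $q$-subset). This yields the first identity $h_q(n,x+1)=h_q(n,x)+\binom{n-|x|}{q-|x|}$. I would then check consistency with the stated boundary conventions — at $x=0$ it reproduces $h_q(n,1)=\binom{n}{q}$ from Lemma~\ref{h_reccurence}, and at $q=0$ it is compatible with $h_0(n,k)=1$ — which is routine. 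Finally, the second identity is not a separate argument: substituting $(n,q,x)\mapsto(n-1,q-1,x-1)$ into the first identity gives $h_{q-1}(n-1,x)=h_{q-1}(n-1,x-1)+\binom{n-1-|x-1|}{q-1-|x-1|}$, and rearranging produces exactly the claimed decrement.
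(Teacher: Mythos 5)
Your proposal is correct and follows essentially the same route as the paper's proof: identify the single new row $r_x$, show that a $q$-subset $Q$ contributes a new projected row exactly when $Q$ contains the support of $x$ (your two directions match the paper's, with the minor cosmetic difference that you flip a single out-of-$Q$ bit while the paper zeroes all of them), count such $Q$ as $\binom{n-|x|}{q-|x|}$, and obtain the second identity by the substitution $(n,q,x)\mapsto(n-1,q-1,x-1)$. The boundary checks you defer as routine are exactly the ones the paper writes out explicitly.
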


\begin{proof}
    We prove the first formula, the second one then follows directly by applying the first one for $x-1, q-1$ and $n-1$. \\
    For the 
    boundary values of $q$ and $x$, we have $h_0(n,1)=1=0+1=h_0(n,0)+{n\choose 0}$, $h_0(n,x+1)=1=1+0=h_0(n,x)+{n-|x|\choose -|x|}$ for $x\ge 1$, and $h_q(n,1)={n\choose q}=0+{n\choose q}=h_q(n,0)+{n-|0|\choose q-|0|}$.\\
    For the notrivial cases, suppose that $q\ge 1$ and $x\ge 1$.
    The only difference between \(H_{n,x} \) and \(H_{n,x+1} \) is that \(H_{n,x+1}\) has one extra row, which is the binary representation of \(x\) with zeroes padded to the left if needed. Let \(S\) be the set of column indices where the last row of \(H_{n,x+1} \) has a \(1\). \\
    We first observe that \(dif(H_{n,x}(Q)) = dif(H_{n,x+1}(Q)) \) whenever \(S \not\subseteq Q \subseteq \{1,2,\ldots,n\}\). To see this let \(i \in S\backslash Q \) and $y$ be the number with binary representation having the same entry as $x$ in the positions belonging to $Q$ and 0's in all other positions. Then $y<x$ and \(H_{n,x}\) contains a row which is the binary representation of $y$. Since this row of \(H_{n,x} \) is equal to the last row of \(H_{n,x+1}\) when only looking at the columns with indices in \(Q\), \(dif(H_{n,x}(Q)) = dif(H_{n,x+1}(Q)) \). \\
    Then we see that \(dif(H_{n,x+1}(Q) = dif(H_{n,x}(Q)) +1  \) whenever \(S \subseteq Q \). This is because there is no row in \(H_{n,x} \) where all the columns with indices in \(S\) are equal to \(1\), since the number of this row would be greater or equal to \(x\). \\
    So we are left with counting how many subsets \(Q\) of \(\{1,...,n\} \) satisfy \(S \subseteq Q \) and \(|Q| = q  \). This is exactly \({n-|S| \choose q-|S|  } = {n-|x| \choose q-|x| } \). 

\end{proof}

\begin{coll}
    \label{plus_more}
    For any integers \(q,n,x,j\) such that $0\le q\le n$, $0\le x$,  \(1\leq j \)  and \(x+j \leq 2^n \), we have
    \[h_q(n,x+j) = h_q(n,x) + \sum_{i=x}^{x+j-1}{n-|i| \choose q-|i| }.  \]   
    Moreover, whenever \(1 \leq q \leq n\) and \(1 \leq j\le x \leq 2^{n-1} \),  we have
               \[h_{q-1}(n-1,x-j) = h_{q-1}(n-1,x) - \sum_{i=x-j}^{x-1}   
 {n - 1 - |i| \choose q-1-|i| },\]   
 and whenever \(1 \leq q \leq n \) and \(1 \leq j\le x-1\leq 2^{n-1} \), we have
 \[h_{q-1}(n-1,x-j-1)\]\[ = h_{q-1}(n-1,x-1) - \sum_{i=x-j-1}^{x-2}   
 {n - 1 - |i| \choose q-1-|i| }.\] 
\end{coll}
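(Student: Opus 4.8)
The plan is to derive all three identities by iterating the two single-step recurrences of Lemma \ref{one_bigger}; no new idea is needed, since each identity is simply a telescoping sum of those recurrences. The only real work is to verify that the side conditions of Lemma \ref{one_bigger} are satisfied at every step.

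First I would establish the first identity by induction on $j$ (equivalently, by telescoping). The base case $j=1$ is precisely the first formula of Lemma \ref{one_bigger}. For the inductive step I write $h_q(n,x+j)=h_q(n,x+j-1)+{n-|x+j-1| \choose q-|x+j-1|}$, a legitimate instance of Lemma \ref{one_bigger} because $0\le x+j-1\le 2^n-1$ follows from $x\ge 0$, $j\ge 1$ and $x+j\le 2^n$, and then substitute the induction hypothesis for $h_q(n,x+j-1)$; the two contributions combine into $\sum_{i=x}^{x+j-1}{n-|i| \choose q-|i|}$.

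Next I would prove the second identity in the same manner, now iterating the second (decreasing) formula of Lemma \ref{one_bigger}. Applying it at the indices $m=x,x-1,\ldots,x-j+1$ peels off the terms ${n-1-|m-1| \choose q-1-|m-1|}$, which after the reindexing $i=m-1$ telescope into the stated sum $\sum_{i=x-j}^{x-1}{n-1-|i| \choose q-1-|i|}$. The hypothesis $1\le j\le x\le 2^{n-1}$ is exactly what guarantees that each intermediate $m$ lies in $[1,2^{n-1}]$, so every application of Lemma \ref{one_bigger} is valid.

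Finally, the third identity requires no separate calculation: it is the second identity with $x$ replaced throughout by $x-1$. That substitution turns the hypothesis $1\le j\le x\le 2^{n-1}$ into $1\le j\le x-1\le 2^{n-1}$ and the conclusion into exactly the third displayed equation. The main, and essentially the only, obstacle is the bookkeeping of these side conditions: one must keep track of the ranges of $q$, $x$ and the running index, including the degenerate cases $q=0$ or index $0$, all of which Lemma \ref{one_bigger} has already folded into its boundary conventions. Once those are checked the corollary follows immediately.
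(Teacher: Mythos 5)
Your proposal is correct and follows exactly the paper's own route: the first two identities by induction on $j$ (telescoping Lemma~\ref{one_bigger}), and the third by substituting $x-1$ for $x$ in the second. The paper states this in one sentence; your version just spells out the side-condition bookkeeping, which checks out.
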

\begin{proof}
     The first two formulae follow from Lemma~\ref{one_bigger} by induction on $j$, the third formula follows from the second by substituting $x-1$ for $x$.
\end{proof}

In view of this corollary, the inequality (\ref{DecByJThe3.1}) is equivalent to the claim that our goal is to prove that
$$ \sum_{i=\lceil\frac{k}{2}\rceil}^{\lceil\frac{k}{2}\rceil+j-1} {n-|i|\choose q-|i|} \ge 
\sum_{i=\lfloor\frac{k}{2}\rfloor-j}^{\lfloor\frac{k}{2}\rfloor-1} {n-1-|i|\choose q-1-|i|}$$
holds true for all feasible $q,n,k$ and $j$.

We first show some useful properties of Hamming weights
which extend the following lemma from \cite{graham} whose proof was finalized in \cite{missing_case}.

\begin{lemma}(\cite{graham,missing_case})
    \label{graham}
    Let \(s,t\) be non-negative integers. Then there exists a bijective mapping \(\theta : [0,r] \rightarrow [s,s+r] \) such that \(|\theta(k)| \geq |k| \) for every $k\in [0,r]$.
\end{lemma}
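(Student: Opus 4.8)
The plan is to convert the existence of $\theta$ into a purely enumerative inequality about Hamming weights and then to prove that inequality by an induction following the binary structure of the integers. Both $[0,r]$ and $[s,s+r]$ contain exactly $r+1$ integers; writing their weights in non-increasing order as $a_1\ge\cdots\ge a_{r+1}$ and $b_1\ge\cdots\ge b_{r+1}$ respectively, a weight-nondecreasing bijection is obtained by matching equal ranks, and a standard exchange argument (equivalently Hall's theorem, the source neighbourhoods being nested because they depend only on the weight) shows that such a $\theta$ exists if and only if $b_i\ge a_i$ for all $i$. This pointwise domination of the sorted sequences is in turn equivalent to the family of tail inequalities
\[
\#\{k\in[0,r]:|k|\ge w\}\ \le\ \#\{k\in[s,s+r]:|k|\ge w\}\qquad(w\ge 0),
\]
so it suffices to prove these for all $s,r,w\ge 0$.

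I would then package the tail inequalities as a single superadditivity statement. With $f_w(N)=\#\{k:0\le k<N,\ |k|\ge w\}$, the count over $[s,s+r]$ is $f_w(s+r+1)-f_w(s)$ and the count over $[0,r]$ is $f_w(r+1)$, so the tail inequality becomes, on setting $a=s$ and $b=r+1$,
\[
f_w(a)+f_w(b)\ \le\ f_w(a+b)\qquad(a,b\ge 0).
\]
Hence the whole lemma is equivalent to the assertion that every $f_w$ is superadditive.

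I would prove superadditivity by strong induction on $a+b$, using the dyadic recursion $f_w(2^m+N')=\sum_{j\ge w}\binom{m}{j}+f_{w-1}(N')$ for $0\le N'<2^m$ (valid because every integer of $[2^m,2^m+N')$ carries the bit $2^m$ and has its remaining bits free), together with the monotonicity $f_w(N)\le f_{w-1}(N)$ and Pascal's identity. Let $2^m$ be the largest power of two with $2^m\le a+b$, so $a+b\in[2^m,2^{m+1})$, and assume $a\le b$. If the larger summand already reaches the threshold, say $b=2^m+b'$, then $a<2^m$, one has $a+b-2^m=a+b'$, the common term $\sum_{j\ge w}\binom{m}{j}$ occurring in both $f_w(a+b)$ and $f_w(b)$ cancels, and the goal reduces to $f_{w-1}(a+b')\ge f_w(a)+f_{w-1}(b')$; this follows from superadditivity of $f_{w-1}$ at the strictly smaller sum $a+b'$ (inductive hypothesis) combined with $f_w(a)\le f_{w-1}(a)$.

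The hard case, and the step I expect to be the main obstacle, is when both summands lie below $2^m$ while their sum is at least $2^m$, so that the leading bit of $a+b$ is created by a carry. Now neither $f_w(a)$ nor $f_w(b)$ contributes the term $\sum_{j\ge w}\binom{m}{j}$ that appears in $f_w(a+b)$, the clean cancellation is unavailable, and $a,b$ no longer decompose against the same power of two as their sum, so one must instead control how the carry propagates through the lower bits — a finer induction that peels off successive bits of $a$ and $b$ and applies Pascal's identity repeatedly, rather than the single reduction used in the other cases. The subtlety of precisely this carry case, compounded by the degenerate boundary values $f_w(0)=0$, $f_0(N)=N$ and $\binom{p}{q}=0$ for $q\notin[0,p]$, is exactly the kind of situation in which a case is easily overlooked, which is presumably why the argument of \cite{graham} had to be completed in \cite{missing_case}; in the final write-up I would either push this finer induction through or, since the statement is already established there, invoke their result directly.
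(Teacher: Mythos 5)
This lemma is one the paper does not prove at all: it is imported verbatim from \cite{graham}, with the missing case of that proof supplied by \cite{missing_case}. So there is no in-paper argument to compare against, and your proposal has to stand on its own as a self-contained proof. Its first two stages do stand: the reduction via Hall's theorem (with nested neighbourhoods determined by the minimum weight in a source set) to the tail inequalities $\#\{k\in[0,r]:|k|\ge w\}\le\#\{k\in[s,s+r]:|k|\ge w\}$ is correct, and so is the repackaging as superadditivity $f_w(a)+f_w(b)\le f_w(a+b)$ of the counting functions $f_w(N)=\#\{k<N:|k|\ge w\}$. The case you do resolve (the larger summand $b$ at least $2^m$, where $2^m\le a+b<2^{m+1}$) is also handled correctly: the leading block cancels and the claim drops to $f_w(a)+f_{w-1}(b')\le f_{w-1}(a+b')$, which follows from the inductive hypothesis at threshold $w-1$ together with $f_w\le f_{w-1}$.

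The genuine gap is the carry case, $a,b<2^m\le a+b$, which you explicitly leave open. This is not a boundary nuisance but the entire combinatorial content of the lemma: in every other configuration the intervals decompose against a common power of two and the inequality is inherited almost for free, whereas here the weight-$\ge w$ elements of $[0,a+b)$ above $2^m$ must be charged against elements of $[0,a)$ and $[0,b)$ whose bit patterns are unrelated to them, and no single application of your dyadic recursion produces that charging. Your own remark that this is ``presumably why the argument of \cite{graham} had to be completed in \cite{missing_case}'' is telling --- the step you defer is exactly the one that historically required a separate paper to repair --- so the proposal as written is a correct reformulation plus the easy half of an induction, not a proof. The fallback of citing \cite{graham,missing_case} directly is legitimate and is precisely what the paper does, but then the superadditivity machinery is doing no work; if you want a self-contained argument you must push the carry case through (for instance by a finer induction that splits $a$ and $b$ at the highest bit position where a carry is first generated), and until that is done the lemma is assumed, not proven.
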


We will need a generalization of this lemma whose proof depends on the following observation:

\begin{obs}
        \label{reverse_triangle}
        Let  \(x \ge t\) be non-negative integers. Then \(|x-t| \geq |x|-|t|  \).
\end{obs}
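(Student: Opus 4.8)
The plan is to deduce the observation from the more familiar fact that Hamming weight is \emph{subadditive} under addition, namely that $|a+b|\le |a|+|b|$ for all non-negative integers $a,b$. This is exactly the ``triangle inequality'' for Hamming weight, and our statement $|x-t|\ge |x|-|t|$ is its reverse-triangle counterpart. Indeed, setting $y=x-t$ (which is a non-negative integer since $x\ge t$) gives $x=y+t$, so subadditivity yields $|x|=|y+t|\le |y|+|t|=|x-t|+|t|$, and rearranging produces exactly $|x-t|\ge |x|-|t|$. Thus the whole observation collapses to the single subadditivity inequality.

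To prove $|a+b|\le |a|+|b|$ I would argue by strong induction on $a+b$, peeling off the least significant bit. Write $a=2a'+\alpha$ and $b=2b'+\beta$ with $\alpha,\beta\in\{0,1\}$, so that $|a|=|a'|+\alpha$ and $|b|=|b'|+\beta$. If $\alpha+\beta\le 1$ there is no carry out of the lowest position, $a+b=2(a'+b')+(\alpha+\beta)$, hence $|a+b|=|a'+b'|+(\alpha+\beta)$, and the induction hypothesis applied to the strictly smaller sum $a'+b'$ finishes this case. If $\alpha=\beta=1$ a carry is produced: $a+b=2(a'+b'+1)$, so $|a+b|=|a'+b'+1|$, and applying the induction hypothesis first to $a'+b'$ together with the single bit $1$, and then to $a'$ and $b'$, gives $|a'+b'+1|\le |a'+b'|+1\le |a'|+|b'|+1\le |a|+|b|$. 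The cases $a=0$ or $b=0$ (in particular $a+b\le 1$) serve as trivial base cases.

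A slicker alternative, should a sharper statement be wanted, is to make the carry bookkeeping explicit: summing the per-position identity $a_i+b_i+c_i=s_i+2c_{i+1}$ (where $a_i,b_i$ are bits of $a,b$, the $c_i$ are carries, and $s_i$ are the bits of $a+b$) over all positions telescopes the carries and yields $|a|+|b|=|a+b|+2\,(\#\text{carries})$, from which $|a+b|\le|a|+|b|$ is immediate and even quantifies the gap. I expect no genuine obstacle here; the only point demanding a moment's care is verifying that the inductive calls in the carry case are to strictly smaller instances, which they are, since $a'+b'+1<a+b$ and $a'+b'<a+b$ whenever $\alpha=\beta=1$, so the strong induction is well founded.
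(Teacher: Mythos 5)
Your argument is correct, but it takes a different (and considerably more self-contained) route than the paper. The paper disposes of this observation in one line by appealing to the standard binary subtraction algorithm: each borrow incurred while computing $x-t$ can decrease the Hamming weight by at most one relative to $x$, and borrows are charged to the $1$-bits of $t$, so $|x-t|\ge |x|-|t|$. You instead reduce the subtractive statement to the additive triangle inequality $|a+b|\le|a|+|b|$ via the substitution $y=x-t$, and then prove subadditivity by strong induction on $a+b$, splitting off the least significant bit and handling the carry case separately. Your reduction is clean and the induction is well founded exactly as you check (in the carry case $a+b=2(a'+b'+1)$, so both inductive calls are to strictly smaller sums). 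What your approach buys is a complete proof from first principles rather than an appeal to a familiar algorithm; what the paper's phrasing buys is brevity, which is arguably appropriate for an ``Observation.'' One small correction to your optional ``slicker alternative'': summing the per-position identity $a_i+b_i+c_i=s_i+2c_{i+1}$ telescopes to $|a|+|b|=|a+b|+(\#\text{carries})$, not $|a+b|+2(\#\text{carries})$, since the carry sum $\sum_i c_i$ on the left cancels one copy of $\sum_i c_{i+1}$ on the right (check $a=b=1$: $|a|+|b|=2$, $|a+b|=1$, one carry). This does not affect the inequality you need, and your main inductive proof stands as written.
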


\begin{proof}
    This follows directly from the standard subtraction algorithm for integers in binary representation.
\end{proof}

\begin{lemma}
    \label{number_theory}
    Let \(s,r,t\) be non-negative integers such that \(r,t\geq 1 \) and \(s\geq r+t-1 \). Denote by \(T=[s,s+r-1]\) and \(B =  [s-r-t+1,s-t]  \). Then there exists a bijective mapping \(\theta : T \rightarrow B  \) such that \(|\theta(x)| \geq |x|-|t|  \) for all \(x \in T \).
\end{lemma}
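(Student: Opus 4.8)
The plan is to recast the existence of $\theta$ as a bipartite matching problem and then check Hall's condition. Since the requirement $|\theta(x)|\ge |x|-|t|$ depends on $x$ only through its Hamming weight $|x|$, and the admissible targets of a heavier $x$ form a subset of those of a lighter one, the marriage theorem reduces the lemma to a single family of threshold inequalities: a weight-feasible bijection $\theta\colon T\to B$ exists if and only if $\#\{x\in T:\ |x|\ge c+|t|\}\le \#\{z\in B:\ |z|\ge c\}$ for every integer $c$. Equivalently, writing the sorted weight sequences of $T$ and $B$ as $a_1\le\cdots\le a_r$ and $b_1\le\cdots\le b_r$, it suffices to prove the majorization $b_i\ge a_i-|t|$ for all $i$. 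This reformulation is the part I am most confident about, and it is what I would establish first.

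To prove the threshold inequalities I would exploit that $B$ is an exact translate of $T$, namely $B=T-(r+t-1)$, and separate the shift into its two meaningful pieces. First, Observation~\ref{reverse_triangle} applied with the offset $t$ is legitimate because every $x\in T$ satisfies $x\ge s\ge r+t-1\ge t$, and it shows that the map $x\mapsto x-t$ carries $T$ onto $T-t=[s-t,\,s-t+r-1]$ while spending at most $|t|$ of Hamming weight; this accounts for exactly the deficit the lemma permits. What then remains is to rearrange the adjacent, equally long interval $T-t$ onto $B=[s-t-(r-1),\,s-t]$ with no further loss of weight, and the natural tool for such a weight-non-decreasing rearrangement is Graham's Lemma~\ref{graham}, which bijects an initial segment onto an arbitrary interval of the same length without decreasing weight; comparing both $T-t$ and $B$ against the common initial segment $[0,r-1]$ is how Graham's result would be brought in.

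The step I expect to be the genuine obstacle is precisely this final comparison of the two interval weight profiles. A uniform translation is too crude, yielding only the weaker deficit $|r+t-1|$, while Graham's lemma in isolation relates each interval to $[0,r-1]$ but does not directly relate the two intervals to one another, so the rearrangement must be chosen with care rather than taken as a bare shift. The argument is delicate exactly when $T$ and $B$ straddle a power of two and their weight patterns diverge. I would therefore finish by induction on the bit length $n$: split $[0,2^n-1]$ at the most significant bit into a low half and a high half (each high-half element carrying Hamming weight one more than its low-half shadow), track how $T$ and $B$ distribute over the two halves, and reduce each configuration to shorter instances of the statement of Lemma~\ref{number_theory}, appealing to Observation~\ref{reverse_triangle} and Graham's Lemma~\ref{graham} in the aligned base cases. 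The configuration in which the splitting point lies inside both $T$ and $B$, so that each interval contributes genuinely to both halves, is the one I expect to demand the most bookkeeping.
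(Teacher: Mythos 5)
Your opening reduction is sound: because admissibility depends only on Hamming weight and the neighborhoods are nested, Hall's theorem does reduce the lemma to the threshold inequalities $\#\{x\in T:\ |x|\ge c+|t|\}\le\#\{z\in B:\ |z|\ge c\}$, i.e.\ to the sorted-weight majorization $b_i\ge a_i-|t|$; this is a legitimate and arguably cleaner restatement than constructing $\theta$ explicitly, as the paper does. The problem is that everything after this restatement either fails or is left undone. Your two-stage decomposition --- first shift by $t$, spending at most $|t|$ of weight by Observation~\ref{reverse_triangle}, then rearrange $T-t$ onto $B$ \emph{with no further loss} --- asks for something that is false. Take $r=2$, $t=1$, $s=3$ (so $s\ge r+t-1$ holds): then $T-t=\{2,3\}$ has weight multiset $\{1,2\}$, while $B=\{1,2\}$ has weight multiset $\{1,1\}$, and no weight-non-decreasing bijection from $T-t$ onto $B$ exists because $|3|=2$ exceeds every weight occurring in $B$. (The lemma itself is fine here, since the budget $|x|-|t|$ only demands weight $\ge 1$.) So the permitted loss of $|t|$ cannot be charged entirely to the subtraction step; it must be interleaved with the rearrangement, and your Observation-then-Graham pipeline cannot produce the required majorization.

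The fallback you describe --- induction on bit length, splitting at the most significant bit and invoking Lemma~\ref{graham} in the aligned cases --- is the right neighborhood, but it remains a sketch, and the configuration you yourself flag as hardest is exactly where the content lies. The paper makes this precise by inducting on $r$ rather than on bit length: it stacks the binary expansions of $T$ (decreasing) above those of $B$ (decreasing), locates the lowest row whose leading bit is $1$, and splits both intervals at that row. The pair of blocks not straddling the flip is handled by the induction hypothesis with the \emph{same} $t$; the pair that does straddle it becomes, after deleting the leading column, an initial segment $[0,\ell-1]$ matched against an arbitrary length-$\ell$ interval, so Lemma~\ref{graham} applies there and the entire weight loss is the single unit contributed by the leading bit (with a complementation trick when the flip occurs inside $B$), which is within budget since $1\le|t|$. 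Without an argument of this precision --- in particular, a correct accounting of where the weight is lost --- your proposal does not yet establish the majorization inequalities, and hence does not prove the lemma.
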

\begin{proof}

    Our proof works by induction on \(r\). When \(r=1\), we have \(T=\{s\} \) and \(B = \{s-t\} \). The only possible mapping \(\theta \) then simply maps \(s\) to \(s-t \) and we see that \(|\theta(s)| = |s-t| \geq |s|-|t| \) by Observation~\ref{reverse_triangle}. Thus, the base case \(r=1\) is established for all values of \(t \geq 1\).
    
    Let \(r>1 \).
    Create two matrices with \(r\) rows each $$M_T = \begin{pmatrix}
        \overrightarrow{s+r-1} \\
        \vdots \\
        \overrightarrow{s+1} \\
        \overrightarrow{s} \\
    \end{pmatrix} \mbox{ and } M_B = \begin{pmatrix}
        \overrightarrow{s-t} \\
        \overrightarrow{s-t-1} \\
        \vdots \\
        \overrightarrow{s-r-t+1} \\
    \end{pmatrix} $$ where \(\overrightarrow{x}\) is the base 2 representation of \(x\) as a binary vector with \(0\)-s padded to the left so that all  vectors have the same length. Finally let \(M = \begin{pmatrix}
        M_T \\
        M_B
    \end{pmatrix} \).
    
    Reformulating the lemma in this matrix context we seek a bijective mapping \(\theta\) of the rows of $M_T$ to the rows of $M_B$ such that \(|\theta(x)| \geq |x|-|t| \) holds true for every row $x$ of $M_T$. (With a slight abuse of notation we write \( \theta: M_T \rightarrow M_B  \).)  The induction hypothesis states that this holds true, for this value of $t$, if the number of rows of each matrix is less than $r$. \\
    Without loss of generality we may assume that the first (leftmost) column of \(M\) contains at least one 0 and at least one 1 (since we could disregard this column otherwise).  
    Then if we look at the first column of \(M\), there will be a point where a \(1\) appears for the first time, when moving through the rows from the bottom row up. This could happen either in the \(M_T\) part or in the \(M_B\) part of the matrix. We will deal with these 2 cases separately. \\ 

    \textbf{Case 1} ({The first leftmost \(1\) appears in the \(M_T\) part of the matrix})
         We divide both the \(M_T\) and \(M_B\) matrices further and write $M$ as \[M = \begin{pmatrix}
             T_1 \\
             T_2 \\
             B_2 \\
             B_1
         \end{pmatrix} \]
    where the bottom row of \(T_1\) is the row where the first \(1\) appears (thus that row is \(100...0\)), with $T_2$ being the remainder of $M_T$, and we let \(B_1\) have the same number of rows as \(T_1\). We will map $T_1$ to $B_1$ and $T_2$ to $B_2$.
    Since $T_2$ and $B_2$ have fewer rows than $r$, it follows by the induction hypothesis that, for the same value of $t$, there exists the required mapping \(\theta_1 : T_2 \rightarrow B_2 \). Note also that this is vacuously true if $T_2$ and $B_2$ are empty. Now if we ignore the first column of \(T_1\), then \(T_1(\{2,3,...\})\) is the binary representation of the numbers \(0,1,...,|T_1|-1\). So by Lemma \ref{graham} there is a mapping \(\theta_2 : T_1(\{2,3,...\}) \rightarrow B_1 \) such that \(|\theta_2(x)| \geq |x| \) for every $x$. Adding back the first column of \(T_1\) and using the same mapping between the rows as $\theta_2$, we get a mapping \(\theta_3 : T_1 \rightarrow B_1 \) where \(|\theta_3(x)| \geq |x|-1 \) for every $x$ (since the Hamming weight of \(x\) increases by 1). Clearly \(|x| - 1 \geq |x|-|t| \) when \(t \geq 1 \), hence combining \(\theta_1 \) and \(\theta_3 \) gives us a bijective mapping \(\theta : T \rightarrow B \) with the required properties. \\

    \textbf{Case 2} (The first leftmost 1 appears in the \(M_B\) part)
    We divide the matrix in a similar way as in the first case \[M = \begin{pmatrix}
        T_1 \\
        T_2 \\
        B_2 \\
        B_1
        
    \end{pmatrix} \]
    so that the bottom row of \(B_2\) is the row where the first \(1\) appears in the leftmost column (so this row is \(100...0\)) and  we let \(T_2\) have the same number of rows as \(B_2\). For a binary vector \(x\) let \(\overline{x} \) be the complement of \(x\), so \(\overline{x} = (1,1,1,...,1) - x \). For a binary matrix $A$, let \(\overline{A} \) be the matrix whose rows are the complements of the rows of \(A\).

    By the induction hypothesis, as there are fewer rows and we have the same value of $t$, there is a mapping \(\theta_1 : T_2 \rightarrow B_2 \) with the required properties. 
    The top row of \(B_1\) is \((011...1)\) so if we look at \(\overline{B_1} \), the top row will be  \((100...0)\), the next row will be \((100...01) \) and so on, meaning that if we ignore the first column we are counting up from \(0\) in binary. Lemma \ref{graham} then gives us a mapping  \(\theta_2 : \overline{B_1(\{2,3,...\})} \rightarrow \overline{T_1}  \) with \(|\theta_2(x)| \geq |x| \) for every $x$. Adding back the first column but keeping the row mapping we get a mapping \(\theta_3 : \overline{B_1} \rightarrow \overline{T_1} \) where \(|\theta_3(x)| \geq |x|-1 \). 
    Now define \(\theta_4 : B_1 \rightarrow T_1 \) by \(\theta_4(x) = \overline{\theta_3(\overline{x})} \) and let \(||x||\) be the length of the vector \(x\). 
    \begin{align}
        |\theta_4(x)| - |x| &= |\overline{\theta_3(\overline{x}) } | - |x| \\
        &= ||x|| - |\theta_3(\overline{x})| - (||x||-|\overline{x}|) \\
        &= |\overline{x}| -|\theta_3(\overline{x})|\\
        &\leq 1
    \end{align}
    To get (5) we use the fact that \(|\theta_3(x)| \geq |x|-1 \).
    
   We will now look at the inverse mapping \(\theta_4^{-1} : T_1 \rightarrow B_1 \). For any \( y \in T_1 \), there is an \(x \in B_1 \) such that \(\theta_4(x) = y \). We just showed that \(|\theta_4(x)|-|x| \leq 1 \) which is the same as \(|y|-|x| \leq 1 \) which we can rewrite as \(|y|-|\theta_4^{-1}(y)| \leq 1 \). Multiplying both sides by \(-1\) we get \(|\theta_4^{-1}(y)| \geq |y| -1 \). Combining \( \theta_4^{-1} \) and \(\theta_1 \) in the natural way we get the desired mapping \(\theta : T \rightarrow B \) satisfying \(|\theta(x)| \geq |x|-1 \geq |x|-|t| \) for every $x$. 
   
   Thus the lemma is proven for any \(r,t \geq 1 \) and any $s \geq r+t-1$.  
 \end{proof}

We are now set to prove that the sum in the first formula of the Corollary~\ref{plus_more} is always larger than the sums in the second and third formulae. 
We will need the following well known observation: 
\begin{obs}
    For any integers \(n,k,j\) such that \(0 \leq j\leq k \leq n\), 
    \[{n \choose k} \geq {n-j \choose k-j}.\]
    \label{PascalTriangle}
\end{obs}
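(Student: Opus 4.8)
The plan is to reduce to the single-step case $j=1$ and then iterate. First I would establish
\[{n \choose k} \geq {n-1 \choose k-1}\]
for $0 \leq k \leq n$ directly from Pascal's rule: since ${n \choose k} = {n-1 \choose k-1} + {n-1 \choose k}$ and every binomial coefficient is non-negative, the claim follows immediately by dropping the non-negative term ${n-1 \choose k}$.

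For the general statement I would induct on $j$. The case $j=0$ is the trivial equality ${n \choose k} = {n \choose k}$. For the inductive step, assuming $0 \leq j \leq k \leq n$, I would chain the single-step bound $j$ times,
\[{n \choose k} \geq {n-1 \choose k-1} \geq \cdots \geq {n-j \choose k-j}.\]
The only point requiring verification is that each intermediate pair $(n-i,k-i)$ for $0 \leq i \leq j$ stays in the valid range $0 \leq k-i \leq n-i$, so that Pascal's rule applies honestly at every step. This holds because $k-i \geq k-j \geq 0$, and $n-i \geq k-i$ is equivalent to $n \geq k$, which is assumed; hence no binomial in the chain collapses to the zero convention and each step is legitimate.

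There is essentially no obstacle here: this is the kind of statement whose proof is one line, and the label \emph{PascalTriangle} already signals the intended route through Pascal's rule. The only mild care needed is the boundary bookkeeping just described. An equally short alternative I would keep in reserve is a purely combinatorial injection: ${n-j \choose k-j}$ counts the $(k-j)$-subsets of $\{j+1,\ldots,n\}$, and adjoining the fixed block $\{1,\ldots,j\}$ to each such subset yields a distinct $k$-subset of $\{1,\ldots,n\}$, giving an injection into the objects counted by ${n \choose k}$. I would present the Pascal's-rule version, since it matches the label and needs no appeal to an explicit combinatorial model.
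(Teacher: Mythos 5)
Your proof is correct, and it takes a genuinely different route from the one the paper supplies. The paper gives two arguments: an algebraic one that expands both sides into factorials, forms the ratio, and compares the telescoping products $\prod_{i=n-j+1}^{n} i$ against $\prod_{i=k-j+1}^{k} i$ term by term; and a combinatorial injection sending each $(k-j)$-subset $x$ of an $(n-j)$-set to $x \cup [n-j+1,n]$. Your primary argument instead reduces to the single step ${n \choose k} \geq {n-1 \choose k-1}$ via Pascal's rule and chains it by induction on $j$. What your route buys is that it avoids factorial bookkeeping entirely and degenerates gracefully at the boundaries (the dropped term ${n-1 \choose k}$ is simply $0$ when $k=n$, consistent with the paper's convention that out-of-range binomials vanish), whereas the paper's factorial computation must implicitly keep all six factorials in range and its final product comparison is the step that actually needs the hypothesis $n \geq k$. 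Your reserve alternative, the injection by adjoining a fixed $j$-element block, is essentially identical to the paper's second proof, so you have independently rediscovered that one as well. All three arguments are sound; yours is the one that matches the label \emph{PascalTriangle} most literally.
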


\begin{lemma}
\label{finally}    
For all positive integers $q,n,x,j$ such that $x+j-1\le 2^n$ and $x-j\ge 0$,
\[\sum_{i=x}^{x+j-1}{n-|i| \choose q-|i| } \geq \sum_{i=x-j}^{x-1}   
 {n - 1 - |i| \choose q-1-|i| },\]
 and if $x-j\ge 1$, then
\[\sum_{i=x}^{x+j-1}{n-|i| \choose q-|i| } \geq \sum_{i=x-j-1}^{x-2}   
 {n - 1 - |i| \choose q-1-|i| }.\]
 
\end{lemma}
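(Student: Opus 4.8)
The plan is to pair up the terms of the left-hand sum with the terms of the right-hand sum via the bijection supplied by Lemma~\ref{number_theory}, and then dominate each right-hand term by its partner on the left using Observation~\ref{PascalTriangle}. The numerical coincidence that makes both inequalities work at once is that $|1|=|2|=1$: in each case the integer $t$ we subtract inside the Hamming-weight bound contributes only~$1$.

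For the first inequality I would apply Lemma~\ref{number_theory} with $s=x$, $r=j$, and $t=1$. The hypothesis $s\ge r+t-1$ reads $x\ge j$, which is exactly $x-j\ge 0$, and $r,t\ge 1$ holds since $j$ is a positive integer. This produces $T=[x,x+j-1]$ and $B=[x-j,x-1]$ together with a bijection $\theta:T\to B$ satisfying $|\theta(i)|\ge |i|-|1|=|i|-1$ for every $i\in T$. These are precisely the index ranges of the two sums in the first claim. For the second inequality I would instead take $t=2$, so that $B=[x-j-1,x-2]$; now $s\ge r+t-1$ reads $x\ge j+1$, i.e.\ $x-j\ge 1$, matching the stated extra hypothesis, and since $|2|=1$ the same bound $|\theta(i)|\ge |i|-1$ results.

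The core step is then the term-by-term comparison. Fix $i\in T$ and write $a=|i|$, $b=|\theta(i)|$, so that $b\ge a-1$. Setting $d=b-a+1\ge 0$, I would rewrite the right-hand summand as
\[\binom{n-1-b}{q-1-b}=\binom{(n-a)-d}{(q-a)-d},\]
and compare it with the left-hand summand $\binom{n-a}{q-a}$. Both binomials have the same difference $n-q$ between top and bottom entry, so one is obtained from the other by lowering both entries by $d$; Observation~\ref{PascalTriangle} then gives $\binom{n-a}{q-a}\ge\binom{(n-a)-d}{(q-a)-d}$ whenever $0\le d\le q-a\le n-a$. The inequality $q-a\le n-a$ is simply $q\le n$. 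The degenerate cases are immediate under the convention that a binomial with negative lower index is $0$: if $q-a-d<0$ the right-hand binomial vanishes, and if $q-a<0$ both vanish. Summing this term inequality over $i\in T$ and using that $\theta$ maps $T$ bijectively onto $B$ yields both claimed inequalities.

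I expect the only genuine subtlety to be the bookkeeping around the vanishing binomial coefficients at the boundary, together with the observation $n-a\ge 0$ (which holds because every index $i$ in range satisfies $|i|\le n$). The conceptual difficulty has been pushed entirely into Lemma~\ref{number_theory}: once that bijection is invoked with the correct parameters and the identity $|1|=|2|=1$ is noted, what remains is a routine term-by-term domination.
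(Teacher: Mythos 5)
Your proposal is correct and follows essentially the same route as the paper's own proof: invoke Lemma~\ref{number_theory} with $s=x$, $r=j$ and $t=1$ (resp.\ $t=2$) to get a bijection $\theta$ with $|\theta(i)|\ge|i|-1$, then dominate each right-hand term by its partner via Observation~\ref{PascalTriangle}. Your explicit verification of the hypothesis $s\ge r+t-1$ and your handling of the vanishing-binomial boundary cases are slightly more careful than the paper's write-up, but the argument is the same.
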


\begin{proof}
    We show that there exists a bijection \(\theta \) from \([x,x+j-1] \) to \([x-j,x-1] \)  (or to \([x-j-1,x-2] \), respectively) such that \(|\theta(i)| \geq |i|-1 \) for all $i$. This will prove the lemma since then for every term \({n - |i| \choose q-|i| } \) in the sum of the left hand side there will be a corresponding term in the sum on the right side \[ {n - 1- |\theta(i)| \choose q-1-|\theta(i)| }  \]
    and we see that by Observation~\ref{PascalTriangle}
    \[ {n - 1- |\theta(i)| \choose q-1-|\theta(i)| } \leq {n-1-(|i|-1) \choose q-1-(|i|-1) } = {n-|i| \choose q-|i| }. \]
    So if such a bijection exists, then for every term in the sum on the left hand side there will be a unique element in the sum on the right hand side which is no greater than the element on the left hand side. So then the sum on the left hand side must be greater than or equal to the sum on the right hand one. Now we just need to show that there exist such bijections \(\theta \). We will use Lemma~\ref{number_theory}. \\
{\bf Case 1.} Let \(s = x, r= j \) and \(t=1\). Then by Lemma~\ref{number_theory} there is a mapping \(\theta : [x,x+j-1] \rightarrow [x-1,x-j] \}  \) such that \(|\theta(i)| \geq |i|-|t| = |i|-1 \) for every $i$, which is the first bijection we wanted. \\
{\bf Case 2.} If we set \(t=2 \), then Lemma~\ref{number_theory} gives us a mapping \(\theta : [x,x+j-1] \rightarrow [x-2,x-j-1]  \) such that \(|\theta(i)| \geq |i|-|t| = |i|-1 \) for every $i$, which is the second bijection we wanted. 
\end{proof}

We are now ready to prove Theorem \ref{conjecture_A}. 

\toprove*
\begin{proof}
    We consider the two cases depending on whether \(k \) is either even or odd. 

    \noindent{\bf Case 1 - 
    \(k\) is even.} Set \(x = \frac{k}{2} \) and rewrite the left hand side of Theorem~\ref{conjecture_A} as
    \[\min_{0 \leq j \leq x-1 } h_q(n,x+j) + h_{q-1}(n-1,x-j ) \]
     Then by the first and second part of Corollary~\ref{plus_more} whenever \( 1 \leq j \leq x-1 \), we can rewrite the expression which is minimized as
     \[ h_q(n,x) + \sum_{i=x}^{x+j-1}{n-|i| \choose q-|i| } + h_{q-1}(n-1,x)\]
     \[ - \sum_{i=x-j}^{x-1}   
 {n - 1 - |i| \choose q-1-|i| } \]
 By Lemma~\ref{finally} we have \(\sum_{i=x}^{x+j-1}{n-|i| \choose q-|i| } \geq \sum_{i=k-j}^{x-1}   
 {n - 1 - |i| \choose q-1-|i| }\) for any \( 1\leq j \leq x-1 \), which means that the smallest value occurs when \(j=0\). \\
{\bf Case 2 - \(k\) is odd.} Set \(x=\lceil \frac{k}{2} \rceil  \) and rewrite the expression of the left hand side of Theorem~\ref{conjecture_A} as
\[\min_{0 \leq j \leq x-1 } h_q(n,x+j) + h_{q-1}(n-1,x-1-j ). \]
By the first and third part of Corollary~\ref{plus_more} we can rewrite the part which is minimized for \( 1\leq j \leq x-1 \) as
\[h_q(n,x) + \sum_{i=x}^{x+j-1}{n-|i| \choose q-|i| } + h_{q-1}(n-1,x-1)\]
\[- \sum_{i=x-j-1}^{x-2}   
 {n - 1 - |i| \choose q-1-|i| }. \]
 By Lemma~\ref{finally} we have \(\sum_{i=x}^{x+j-1}{n-|i| \choose q-|i| } \geq \sum_{i=k-j-1}^{x-2}   
 {n - 1 - |i| \choose q-1-|i| } \) for \(1 \leq j \leq x-1 \) so the smallest value of the expression will occur for \(j=0\).
 
\end{proof}

By Lemma~\ref{equivalent} this  proves Theorem~\ref{main_theorem}.

\section{Conclusion}

We have proven a conjecture of \cite{Greedy}, that will allow to compare the teaching dimension achievable by Greedy to that of other machine teaching models. 

For a general consistency matrix $M$, the performance of Greedy can be analyzed by focusing on the $m_q(M)$ value, the sum of the number of distinct rows over all submatrices on $q$ columns. Let us consider the complexity of computing $m_q(M)$ given as input the binary matrix $M$ on $k$ rows and $n$ columns. There is a straightforward algorithm with runtime  $O(n^q k q \log k)$. The question arises if computing $m_q(M)$ is FPT (Fixed Parameter Tractable, see \cite{fpt}) when parameterized by $q$. In other words, is there an algorithm whose runtime is polynomial in the size of $M$, with any exponential dependency restricted to $q$ only? We leave this as an open problem.

\section*{Acknowledgements}
This project is supported by NRC project (329745) Machine Teaching for Explainable AI.

\bibliography{main}

\end{document}